\documentclass[11pt,a4paper]{amsart}
\usepackage{hyperref,texmac,enumitem}
\usepackage[all]{xy}
\usepackage{amsmath,mathtools}
\xyoption{web}

\usepackage{bbm}

\operators{Aut,Div,Pic,Pid,Id,Cl,ab,Norm,cts,Ext,opp,J,M,skew,com,Core,Stab,Epi,sign}
\operators{Ara,fin,im,torsfree,c,Nm,tori,D,f,ring,CL,Surj,Av,PSL,cpt}
\calsymbols{c}{O,C,B,A,R,K,M,D,S,F,X,Y,L,P,G,E}
\bbsymbols{b}{E,A,P,T,H}
\fraksymbols{f}{F,p,A,M,m,a,n}

\NewDocumentCommand\FbL{O{b} O{L}}{\cF^{#1}_{#2}}

\begin{document}
\author{Alex Bartel}
\address{School of Mathematics and Statistics, University of Glasgow,
University Place, Glasgow G12 8SQ, United Kingdom}
\author{Adam Morgan}
\address{Gonville and Caius College, Trinity Street, Cambridge, CB2 1TA, United Kingdom}

\email{alex.bartel@glasgow.ac.uk, ajm269@cam.ac.uk}
\date{\today}
\title[Reasonable functions in Cohen--Lenstra heuristics]{On the notion of reasonable functions in Cohen--Lenstra type heuristics}

\begin{abstract}
    We elucidate the notion of ``reasonable functions'' in the context of Cohen--Lenstra heuristics. For a sequence $(x_i)_i$ of independent random variables valued in an infinite discrete probability space $(X,p)$, we investigate for what sets $S$ of functions on $X$ it is true that almost surely, for all $f\in S$, the average of $f$ on the sequence is equal to its expected value. We show that if $p$ decays sufficiently quickly, as e.g. the Cohen--Lenstra distribution does, then $\bigcup_{q>1}L^q(X)$ is such a set; while if $p$ decays sufficiently slowly, then not even $\bigcap_{q<\infty}L^q(X)$ has this property.
\end{abstract}

\maketitle

\section{Introduction}
We are motivated by the following problem: one is given a sequence of certain objects, and one wishes to express the notion that this sequence ``looks like'' the objects were drawn independently from a specific discrete probability space.
The concrete example that we have in mind is that of the Cohen--Lenstra heuristic \cite{CL},
where the sequence in question is a certain sequence $(G_n)_n$ of finite abelian $\ell$-groups, for a fixed prime number $\ell$,
and the probability space consists of isomorphism classes of finite abelian $\ell$-groups with the probability weight of a group $G$ being inversely proportional to the size of its automorphism group $\Aut(G)$. To express the idea that the given sequence can be convincingly modelled as being random,
the approach taken by Cohen and Lenstra in \cite{CL}, and followed by essentially all other authors in this area, is to postulate that for every ``reasonable'' $\C$-valued function $f$ on the space of isomorphism classes of finite abelian $\ell$-groups, its average $\lim_n \tfrac{1}{n}\sum_{i\leq n}f(G_i)$
should be equal to the expected value $\bE(f)$ with respect to the probability distribution $p$. This immediately raises the question: what should be considered a reasonable function?

Of course for $f$ to be reasonable, $\bE(f)$ should exist in the first place, i.e. $f$ should be an $L^1$ function. However, as \cite[Theorem 7.1]{BL20} shows, if the sequence $(G_n)_n$ really were random, then the probability that every $L^1$-function has the right average on that sequence would be $0$. In \cite{BL20} it was conjectured that the statement ``average $=$ expected value'' should hold for all $f$ for which $\bE(|f|^n)$ exists for all $n\geq 1$, but it was not proven that such a statement holds with probability $1$ for a random sequence -- see \cite[Question 7.4]{BL20}. In this short note we answer that question, but also show that the conjecture of \cite{BL20} is probably overly conservative.

Let $(X,p)$ be an infinite discrete probability space such that $p(x)>0$ for all $x\in X$.
Let $(Y,\bP)$ be the probability space $X^{\Z_{\geq 1}}$ with the induced product probability measure.
For $y\in Y$ we write $y_i$ for the $i$-th coordinate of $y$. The following definition is taken from \cite[\S 7]{BL20}.
\begin{definition}
Call a set $S$ of functions $f\colon X\to \C$ \emph{promising} if for all $f\in S$, the sum $\bE(|f|) = \sum_{x\in X}|f(x)|p(x)$ is finite, and there is a subset $Y'$ of $Y$ of measure $1$ such that for all $y\in Y'$ and all $f\in S$ we have
$$
    \lim_{n\to \infty} \tfrac{1}{n}\sum_{i=1}^n f(y_i) = \bE(f).
$$
\end{definition}
For example, the strong law of large numbers implies that every singleton $S=\{f\}$ for which $\bE(|f|)$ is finite is promising.

We will take ``almost all $y$'' to mean ``all $y$ in some measure $1$ subset of $Y$''.
For $q\in \R_{>0}$, denote by $L^q(X)$ the set of functions $f\colon X\to \C$ for which $\bE(|f|^q)$ is finite, and $L^\infty(X)$ is the space of bounded functions $X\to \C$.

Our main result, a combination of Theorems \ref{thm:main} and \ref{thm:notpromising}, is the following.
\begin{theorem}\label{thm:intro_main}
    Let $\delta\in (0,1)$. Then the space $L^{1/\delta}(X)$ is promising if and only if $\sum_{x\in X}p(x)^{1-\delta}$ converges.
\end{theorem}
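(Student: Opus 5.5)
Set $q=1/\delta>1$. Both directions rest on the empirical measure $\hat p_n(x)=\tfrac1n\#\{i\le n: y_i=x\}$ of a random sequence, since $\tfrac1n\sum_{i\le n} f(y_i)=\sum_x f(x)\hat p_n(x)$. A single measure-one set $Y'$ must work for all $f\in L^q(X)$ at once, so the natural criterion is duality between $L^q(X)$ and $L^{q'}(X)$, where $q'=1/(1-\delta)$. We will show that $\sum_x p(x)^{1-\delta}<\infty$ is exactly the condition making the relevant dual norm of $\hat p_n - p$ go to $0$ almost surely.

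\textbf{Sufficiency.} Assume $\sum_x p(x)^{1-\delta}<\infty$. Write $g_n(x)=\hat p_n(x)/p(x)-1$, so that
$$\tfrac1n\sum_{i\le n} f(y_i)-\bE(f)=\sum_x f(x)\,g_n(x)\,p(x).$$
By H\"older with respect to the measure $p$, the absolute value of this is at most $\|f\|_{L^q}\,\|g_n\|_{L^{q'}(p)}$. It therefore suffices to show that almost surely
$$\|g_n\|_{L^{q'}(p)}^{q'}=\sum_x |\hat p_n(x)-p(x)|^{q'}\,p(x)^{1-q'}\longrightarrow 0.$$
This single event does not depend on $f$, which is what gives one set $Y'$ for all $f$.

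To prove the convergence, split $X$ into a finite set $F$ and its complement. On $F$ the strong law gives $\hat p_n(x)\to p(x)$ for each $x$. On the tail, use $|a-b|^{q'}\le 2^{q'-1}(a^{q'}+b^{q'})$. The $b$-part is bounded by $\sum_{x\notin F}p(x)$, which is small. The $a$-part is $\sum_{x\notin F}\hat p_n(x)^{q'}p(x)^{1-q'}$. Since $\hat p_n(x)\le 1$ and $q'>1$, we have $\hat p_n^{q'}\le \hat p_n$, so this is at most
$$\sum_{x\notin F}\hat p_n(x)\,p(x)^{1-q'}=\tfrac1n\sum_{i\le n} h_F(y_i),\qquad h_F=\mathbbm 1_{X\setminus F}\,p^{1-q'}.$$
Now $\bE(h_F)=\sum_{x\notin F}p^{2-q'}$, which is not obviously finite. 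This is the main obstacle, and the crude bound $\hat p_n^{q'}\le \hat p_n$ is too lossy; I would refine it as follows. Split the tail further according to whether $\hat p_n(x)\le C p(x)$ or not. On the first part, $\hat p_n^{q'}p^{1-q'}\le C^{q'}p$, whose sum over the tail is small. On the second part, a Chernoff/Poisson bound for $\mathrm{Bin}(n,p(x))$ estimates $\bE\bigl[\hat p_n^{q'}p^{1-q'}\,\mathbbm 1\{\hat p_n>Cp\}\bigr]$. The dominant contribution comes from $x$ with $np(x)\lesssim 1$ hit once, giving about $n^{-q'}\cdot np(x)\cdot p(x)^{1-q'}$. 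Summing over $p(x)<1/n$ yields $n^{1-q'}\sum p^{2-q'}$, and one checks this is controlled by $\sum_x p(x)^{1-\delta}$; note that $2-q'$ relates to $1-\delta$ through $q'\delta=q'-1$. Finally, use Borel--Cantelli along $n=2^k$ together with monotonicity of counts between dyadic times to upgrade to almost-sure convergence. An alternative route that might be cleaner: prove almost-sure convergence of $\tfrac1n\#\{i\le n : y_i\notin F\}$-type quantities weighted appropriately, and reduce $L^q$ to indicator-type functions by density plus a uniform maximal bound $\sup_n \tfrac1n\sum_{i\le n} |f(y_i)|\le C(y)\|f\|_q$.

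\textbf{Necessity.} Suppose $\sum_x p(x)^{1-\delta}=\infty$. The aim is to build, for almost every $y$, a function $f_y\in L^q$ with a wrong average; this contradicts promisingness, since $Y'$ is fixed before $f$. Let $a_n$ be the first occurrence of a new element at time $n$. Put $f_y(y_n)=c_n$ at suitable new elements with $\sum c_n^q p(y_n)<\infty$ but $c_n/n\ge 1$ infinitely often; the latter makes the Ces\`aro averages jump, so they do not converge. The divergence hypothesis is what makes the choice possible: with $c_n=n$ we need $\sum n^q p(y_n)<\infty$ along infinitely many times, and the elements first appearing around time $n$ typically have $p\approx 1/n$. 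Divergence of $\sum p^{1-\delta}=\sum p\cdot p^{-\delta}$ guarantees, via a second-moment or Borel--Cantelli argument, infinitely many new elements $x$ first appearing at time $n\le p(x)^{-\delta}$ roughly. For these, $n^q p(x)$ is at most of order one, so one can select a sparse subsequence with summable $n^q p(y_n)$.
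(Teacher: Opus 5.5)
Your H\"older reduction (you need $\|g_n\|_{L^{q'}(p)}\to 0$ almost surely, an event not depending on $f$) is exactly the paper's, and your necessity argument is essentially the paper's as well. The gap is in the tail estimate, which carries the whole content of sufficiency.

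Your claim that $n^{1-q'}\sum_{p(x)<1/n}p(x)^{2-q'}$ is controlled by $\sum_x p(x)^{1-\delta}$ is false. Indeed $2-q'=\frac{1-2\delta}{1-\delta}<1-\delta$, so $p^{2-q'}=p^{1-\delta}\cdot p^{-\delta^2/(1-\delta)}$. The extra factor blows up as $p\to 0$, and restricting to $p(x)<1/n$ does not prevent this. Nor is this an artefact of your bound. If $p(x)\le 1/(2n)$, then $x$ is hit at least once by time $n$ with probability at least $c\,np(x)$, and on that event $g_n(x)\ge 1/(2np(x))$. Hence
\[
\bE\bigl(\|g_n\|_{L^{q'}(p)}^{q'}\bigr)\;\ge\; c'\,n^{1-q'}\sum_{x:\,p(x)\le 1/(2n)}p(x)^{2-q'}.
\]
The right-hand side is infinite for every $n$ and every $p$ whenever $q\le 2$ (then $2-q'\le 0$), the Cohen--Lenstra distribution included. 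It is also infinite for suitable $p$ when $q>2$, e.g.\ $p(k)\propto k^{-a}$ with $\frac{1}{1-\delta}<a\le\frac{1-\delta}{1-2\delta}$. The hypothesis controls only the first moment of $\|g_1\|_{L^{q'}(p)}\approx p(y_1)^{-\delta}$, not its $q'$-th moment $\approx\sum_x p(x)^{2-q'}$. So no Chernoff/Markov bound followed by Borel--Cantelli along $n=2^k$ can succeed. Your alternative route does not escape this either: a bound $\sup_n\frac1n\sum_{i\le n}|f(y_i)|\le C(y)\|f\|_q$ for all $f$ is, by duality, the statement $\sup_n\|h_{n,y}\|_{L^{q'}(p)}<\infty$, which is the same problem.

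The missing idea is a pathwise \emph{uniform} bound, which is where the paper spends the hypothesis (Proposition~\ref{prop:thintailBC}): for each $\epsilon>0$, almost surely $|\hat p_n(x)-p(x)|<\epsilon p(x)^{\delta}$ for all $x$ and all large $n$. For fixed $x$ with $p(x)<\epsilon p(x)^{\delta}$, the probability that $\hat p_n(x)\ge \epsilon p(x)^{\delta}$ for some $n$ is at most $\sum_{k\ge 1}\binom{m_k}{k}p(x)^k\ll p(x)^{1-\delta}$, where $m_k=\lfloor k/(\epsilon p(x)^{\delta})\rfloor$. One then applies Borel--Cantelli over $x$, plus the strong law for the finitely many other $x$. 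This disposes of your problematic piece without any expectation. For tail $x$ one has $\hat p_n\le 2\epsilon p^{\delta}$, so, since $(\delta-1)(q'-1)=-\delta$,
\[
\hat p_n^{\,q'}p^{1-q'}=\hat p_n\,(\hat p_n/p)^{q'-1}\le (2\epsilon)^{q'-1}\,\hat p_n\,p^{-\delta}.
\]
Moreover $\sum_x \hat p_n(x)p(x)^{-\delta}=\frac1n\sum_{i\le n}p(y_i)^{-\delta}\to\sum_x p(x)^{1-\delta}$ almost surely by the scalar strong law. This is the interpolation in Corollary~\ref{cor:almost_sure_convergence}. Mourier's strong law (Remark~\ref{rmrk:Mourier}) works for the same reason: it needs only the first moment of the norm.

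In the necessity direction there is one correction. First occurrence by time $p(x)^{-\delta}$ only gives $n^q p(x)\le 1$, from which no summable subsequence can be extracted. You need, as in Lemma~\ref{lem:overeager}, that for every $\epsilon>0$ infinitely many $x$ are hit by time $\epsilon p(x)^{-\delta}$. The events are dependent but pairwise negatively correlated, so the Erd\H{o}s--R\'enyi form of Borel--Cantelli applies to $\sum_x\epsilon p(x)^{1-\delta}=\infty$. Then intersect over $\epsilon_k=k^{-2\delta}$. After that, your choice of $f$ as the first-hitting time, together with the observation that convergent Ces\`aro means force $f(y_n)/n\to 0$, does finish the proof.
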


By \cite[Theorem 7.1]{BL20}, the space $L^1(X)$ is never promising, while \cite[Proposition 7.5]{BL20} shows that the space $L^\infty(X)$ is always promising. The next two results complement those just mentioned.

\begin{corollary}[Corollary \ref{cor:verypromising}]\label{cor:intro_verypromising}
Suppose that for all $s>0$, the sum $\sum_{x\in X}p(x)^{s}$ converges. Then $\bigcup_{q>1}L^q(X)$ is promising. 
\end{corollary}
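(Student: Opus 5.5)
The corollary should follow from Theorem~\ref{thm:intro_main} together with a countability argument. Under the hypothesis, for every $\delta\in(0,1)$ we have $1-\delta>0$, so $\sum_{x\in X}p(x)^{1-\delta}$ converges. Theorem~\ref{thm:intro_main} then tells us that $L^{1/\delta}(X)$ is promising for every $\delta\in(0,1)$, that is, $L^q(X)$ is promising for every $q\in(1,\infty)$.

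The issue is that $\bigcup_{q>1}L^q(X)$ is an uncountable union, and each $L^q(X)$ comes with its own measure $1$ set $Y'_q$. An uncountable intersection of such sets need not have measure $1$. To get around this, we use the nesting of $L^q$ spaces on a probability space. By Jensen's or H\"older's inequality, $q'<q$ implies $L^q(X)\subseteq L^{q'}(X)$, since $\bE(|f|^{q'})\le \bE(|f|^q)^{q'/q}$. Hence, for any $q>1$, choosing an integer $m$ with $1+1/m<q$ gives $L^q(X)\subseteq L^{1+1/m}(X)$. It follows that
$$\bigcup_{q>1}L^q(X)=\bigcup_{m\geq 1}L^{1+1/m}(X).$$

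Now set $Y'=\bigcap_{m\ge1}Y'_{1+1/m}$, where $Y'_{1+1/m}$ is a measure $1$ set witnessing that $L^{1+1/m}(X)$ is promising. As a countable intersection of measure $1$ sets, $Y'$ has measure $1$. Let $f\in\bigcup_{q>1}L^q(X)$. Then $f\in L^{1+1/m}(X)$ for some $m$, and in particular $f\in L^1(X)$, so $\bE(|f|)<\infty$. Moreover, for every $y\in Y'$ we have $y\in Y'_{1+1/m}$, so $\frac1n\sum_{i\le n}f(y_i)\to\bE(f)$. This is exactly the definition of promising.

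There is no real obstacle here: all the depth lies in Theorem~\ref{thm:intro_main}. The only point needing care is replacing the uncountable union by a countable cofinal one, which is what makes the intersection of the measure $1$ sets legitimate.
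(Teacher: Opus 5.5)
Your proposal is correct and follows the paper's proof: apply Theorem~\ref{thm:main} (the ``if'' direction of Theorem~\ref{thm:intro_main}) for each $q>1$, then use the nesting $L^q(X)\subseteq L^{q'}(X)$ for $q'<q$ to pass to a countable sequence $q_m\downarrow 1$ and intersect the corresponding measure $1$ sets. You have simply spelled out in full the countable cofinality step that the paper compresses into ``taking a sequence $(q_n)_n$ tending to $1$ from above''.
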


\begin{proposition}[Proposition \ref{prop:veryunpromising}]\label{prop:intro_veryunpromising}
Suppose that for all $s<1$, the sum $\sum_{x\in X}p(x)^{s}$ diverges. Then $\bigcap_{q>1}L^q(X)$ is not promising.
\end{proposition}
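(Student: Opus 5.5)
The plan is to reduce Proposition~\ref{prop:intro_veryunpromising} to the ``only if'' direction of Theorem~\ref{thm:intro_main}, i.e.\ Theorem~\ref{thm:notpromising}, by adapting its construction uniformly in the exponent. If one assumes the natural form of that theorem, the argument is short. Suppose $\sum_x p(x)^{s}$ diverges for every $s<1$. Then for each $\delta\in(0,1)$ the sum $\sum_x p(x)^{1-\delta}$ diverges, so by Theorem~\ref{thm:intro_main} the space $L^{1/\delta}(X)$ is not promising. That alone does not suffice, because $\bigcap_{q>1}L^q(X)$ is smaller than each $L^{1/\delta}(X)$. So I will need to open up the construction in Theorem~\ref{thm:notpromising} and produce a single bad family lying in every $L^q$.

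The approach I would take is a direct construction in the style of \cite[Theorem 7.1]{BL20}. Enumerate $X=\{x_1,x_2,\dots\}$ with $p(x_k)$ non-increasing. For each finite index $n$ and each $x\in X$, consider the function $f_{x,n}=c_{x,n}\mathbbm{1}_{x}$. Any function of finite support lies in $\bigcap_q L^q(X)$, so the whole family $S=\{f_{x,n}\}$ sits in the intersection. The aim is to choose the scalars so that almost surely some member of $S$ has the wrong average. Indicator-type functions supported on a single point are exactly where the divergence of $\sum p(x)^s$ enters. The $n$-th sample point $y_n$ takes value $x$ with probability $p(x)$. I would take $f$ supported at $x=y_n$ with value of size about $n$, which shifts the $n$-th partial average by a constant. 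The expectation $n\,p(x)$ must stay small, so this is only useful when $p(y_n)\ll 1/n$.

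The key probabilistic step is therefore to show that almost surely $p(y_n)<n^{-1}\varepsilon_n$ for infinitely many $n$, with $\varepsilon_n\to0$. The probability of this event at step $n$ is $\sum_{p(x)<\varepsilon_n/n}p(x)$. I would convert the hypothesis ``$\sum_x p(x)^{s}=\infty$ for all $s<1$'' into divergence of $\sum_n \bP(p(y_n)<\varepsilon_n/n)$. A dyadic decomposition of $X$ by size of $p$ should do this: if $N_j=\#\{x: p(x)\in[2^{-j-1},2^{-j})\}$, the hypothesis says $\sum_j N_j2^{-js}=\infty$ for all $s<1$. This is the same computation that underlies Theorem~\ref{thm:notpromising}. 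Borel--Cantelli (second form, by independence) then gives infinitely many such $n$ almost surely.

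To make the conclusion hold for all $y$ in a measure-one set, for all $f$ simultaneously, I take $S$ to be the countable family of all functions $n\mathbbm{1}_{x}$, over $n\ge1$ and $x\in X$. Then $\bE(n\mathbbm{1}_x)=np(x)$. For a typical $y$, pick $n$ and $x=y_n$ with $p(x)$ tiny. Suppose $x$ occurs in the sequence only finitely often, which is almost sure for each fixed $x$ and hence for all $x$ jointly, since $X$ is countable. Then the average of $n\mathbbm{1}_x$ tends to $0$, whereas $\bE=np(x)>0$. In fact this already fails to be promising: any nonzero finitely supported $f$ has average tending to $0$, as each point appears with asymptotic frequency $p(x)$. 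So the real content must lie in requiring that the family be simultaneously promising \emph{and} of a natural form. I expect the main obstacle to be pinning down what is actually being ruled out. The clean route is to take $S=\bigcap_{q>1}L^q(X)$ itself and construct, for almost every $y$, a function $f_y$ in it with $\limsup \frac1n\sum_{i\le n} f_y(y_i)\ne\bE(f_y)$. I would build $f_y$ supported on a sparse subsequence of the points $y_{n_k}$ given by Borel--Cantelli, with $f_y(y_{n_k})=n_k$. Membership in every $L^q$ amounts to $\sum_k n_k^q p(y_{n_k})<\infty$, which I would arrange by requiring $p(y_{n_k})<n_k^{-k}$ and choosing the $n_k$ sparse. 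The hardest step is verifying that the Borel--Cantelli divergence survives these super-polynomially small thresholds, and this is exactly where ``for all $s<1$'' rather than a single $s$ is needed.
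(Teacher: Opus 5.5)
Your final plan (the last paragraph) is the right one and has the same architecture as the paper's proof: for almost every $y$, build a $y$-dependent $f_y\in\bigcap_{q>1}L^q(X)$ supported on sample points whose probability is tiny compared with the time at which they occur. But the only substantive step, that such points exist almost surely, is left as ``the hardest step'' and never carried out, so as written this is not yet a proof.

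Your worry there is partly justified. For a single threshold fixed in advance and decaying faster than every power of $n$, divergence can genuinely fail. For $\theta_n=e^{-n}$ one has $\sum_n\bP(p(y_n)<\theta_n)\le\sum_x p(x)\log(1/p(x))$. This is finite if $p$ takes the values $c\,j^{-1}(\log j)^{-3}$, $j\ge2$, although such $p$ satisfies the hypothesis.

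The way out is to handle one polynomial threshold at a time and then diagonalise. For fixed $k\ge1$, the events $E^{(k)}_n=\{p(y_n)<n^{-k}\}$ depend only on $y_n$, hence are independent, and by Tonelli
\[
\sum_{n\ge1}\bP\bigl(E^{(k)}_n\bigr)=\sum_{x\in X}p(x)\,\#\{n\ge1:\ n<p(x)^{-1/k}\}\ \ge\ \sum_{x\in X}\bigl(p(x)^{1-1/k}-p(x)\bigr)=\infty .
\]
So the classical second Borel--Cantelli lemma gives a measure-one set $Y_k$ on which $E^{(k)}_n$ occurs for infinitely many $n$; no dyadic decomposition is needed. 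For $y\in\bigcap_kY_k$, choose inductively $n_1<n_2<\cdots$ with $p(y_{n_k})<n_k^{-k}<\min_{j<k}p(y_{n_j})$. The second inequality makes the points $y_{n_k}$ pairwise distinct, so $f_y$ is well defined.

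Note also that your choice $f_y(y_{n_k})=n_k$ only makes the average at time $n_k$ at least $1$. That contradicts nothing unless you also arrange $\bE(f_y)<1$. With $f_y(y_{n_k})=k\,n_k$ the averages at times $n_k$ are at least $k$. Meanwhile, for every $q>1$ one has $\bE(f_y^q)<\sum_k k^q n_k^{q-k}<\infty$, because $n_k\ge k$ gives $n_k^{q-k}\le k^{q-k}$ once $k\ge q$.

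With this filled in, your argument is a genuine and slightly simpler variant of the paper's. The paper obtains its points from Lemma \ref{lem:overeager} (with $\delta=1/n$, $\epsilon=n^{-2}$). Its events are indexed by $x\in X$ (``$x$ occurs among the first $\epsilon p(x)^{-\delta}$ samples''); these are not independent, so it has to appeal to a Borel--Cantelli lemma for negatively correlated events. Indexing the events by time makes them independent, so the textbook lemma suffices. The same computation with threshold $(\epsilon/n)^{1/\delta}$ would equally replace Lemma \ref{lem:overeager} in Theorem \ref{thm:notpromising}.

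Finally, delete your fourth paragraph, which is false. Each fixed $x$ occurs infinitely often almost surely, with asymptotic frequency $p(x)$. Applying the strong law to the countably many indicators $\mathbbm{1}_x$ therefore shows that the set of \emph{all} finitely supported functions is promising. No family of finitely supported functions can witness non-promisingness, which is precisely why the witness $f_y$ must have infinite support and depend on $y$.
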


We show in Proposition \ref{prop:CL} that the Cohen--Lenstra probability distribution satisfies the hypotheses of Corollary \ref{cor:intro_verypromising}. Thus, it seems reasonable to conjecture that a function $f$ is reasonable for the purposes of \cite[Conjecture 1.5]{BL20} if there exists some $q>1$ such that $\bE(|f|^q)$ exists.

\begin{acknowledgements}
    Both authors are supported by EPSRC programme grant ``Unifying Arithmetic Statistics'', award reference UKRI4256. After completing the first version of the paper, we  asked LLMs to proof-read it. While Corollary \ref{cor:intro_verypromising} was in the original version, along with a partial converse, the precise form of Theorem \ref{thm:intro_main} was suggested by an LLM, as was the argument sketched in Remark \ref{rmrk:Mourier}.
\end{acknowledgements}

\section{The results}
We first prove the ``if'' direction of Theorem \ref{thm:intro_main}. We will use the following auxiliary result.
\begin{proposition}\label{prop:thintailBC}
    Let $\delta\in (0,1)$, and suppose that $\sum_{x\in X}p(x)^{1-\delta}$ converges. Fix $\epsilon\in \R_{>0}$. For $n\in \Z_{\geq 1}$, let
  \[
    A_n=\{y\in Y: \exists x\in X \textup{ with } \left|\tfrac{1}{n}\#\{i\leq n: y_i=x\}-p(x)\right| \geq \epsilon p(x)^{\delta}\}.
  \]
Then we have $\bP(\{y\in Y: y \in A_n\textup{ for infinitely many $n$}\})=0$.
\end{proposition}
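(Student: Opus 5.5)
The plan is to apply the Borel--Cantelli lemma, but not to the events $A_n$ directly. A union bound over $x$ and over all $n$ is too lossy. For a single $x$ with small $p=p(x)$, the event $N_x(n)\geq 1$, where $N_x(n)=\#\{i\leq n: y_i=x\}$, already places $y$ in $A_n$ for every $n\lesssim 1/(\epsilon p^\delta)$. The resulting bound $\sum_{n\lesssim p^{-\delta}} np\approx p^{1-2\delta}$ need not be summable over $x$. Instead I would group the $n$ into dyadic blocks $2^{j}\leq n<2^{j+1}$ and apply Borel--Cantelli to the block events $B_j=\bigcup_{2^j\leq n<2^{j+1}}A_n$. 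I would then exploit the monotonicity of $N_x(n)$ in $n$ to replace each block by a single time $m=2^{j+1}$.

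First, dispose of finitely many $x$. Fix a small constant $c=c(\epsilon,\delta)>0$ to be chosen. Since $\sum_x p(x)^{1-\delta}<\infty$, the set $F=\{x: p(x)^{1-\delta}\geq c\}$ is finite. For each $x\in F$ the strong law of large numbers gives $N_x(n)/n\to p(x)$ almost surely. Hence almost surely $|N_x(n)/n-p(x)|<\epsilon p(x)^\delta$ for all large $n$ and all $x\in F$, since $F$ is finite. So it suffices to treat $x\notin F$. For these, $p^{1-\delta}<c$ means $p<c\,p^\delta\leq \epsilon p^\delta$ once $c\leq\epsilon$. The lower deviation $N_x(n)/n\leq p-\epsilon p^\delta<0$ is then impossible, and only upper deviations matter.

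Next, for $x\notin F$ and $2^j\leq n<2^{j+1}=m$, the condition $N_x(n)\geq n(p+\epsilon p^\delta)$ implies $N_x(m)\geq N_x(n)\geq k_m:=\lceil m\epsilon p^\delta/2\rceil$, using $n\geq m/2$. So
\[
\bP(B_j)\leq \#\{\text{finite-set failures}\}+\sum_{x\notin F}\bP\bigl(N_x(m)\geq k_m(x)\bigr),
\]
and it suffices to show $\sum_j\sum_{x\notin F}\bP(N_x(2^{j+1})\geq k_{2^{j+1}}(x))<\infty$. Since $N_x(m)$ is binomial, I would use the elementary tail bound
\[
\bP(N_x(m)\geq k)\leq\binom{m}{k}p^k\leq (emp/k)^k .
\]
Swapping the order of summation, fix $x$ with $p=p(x)$ and sum over $j$ in two ranges. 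In the range $mp^\delta\leq 2/\epsilon$ we have $k_m\geq1$, and I use the crude bound $\bP\leq mp$. Summing over dyadic $m\lesssim p^{-\delta}$ gives a geometric sum $\ll p\cdot p^{-\delta}=p^{1-\delta}$. In the range $mp^\delta>2/\epsilon$ we have $k_m\approx m\epsilon p^\delta/2$, so $emp/k\leq 2e\,p^{1-\delta}/\epsilon\leq 2ec/\epsilon\leq 1/2$ for $c$ small. The bound becomes $(2ep^{1-\delta}/\epsilon)^{k_m}$, with $k_m\geq1$ roughly doubling with $j$. This sum is dominated by a constant times its first term, so it is $\ll p^{1-\delta}$. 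Altogether the double sum is $\ll\sum_x p(x)^{1-\delta}<\infty$, so Borel--Cantelli shows that almost surely only finitely many $B_j$ occur for $x\notin F$. Combined with the SLLN for $x\in F$, this proves the claim.

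The main obstacle is the bookkeeping in the two-regime sum over $j$, in particular getting the constants right. The threshold $c$ must be chosen so that $2ep^{1-\delta}/\epsilon\leq 1/2$ on $X\setminus F$, and the dyadic reduction should not lose too much: the upper-deviation threshold must stay $\gtrsim m\epsilon p^\delta$ rather than involve $p$ itself. The key conceptual point is recognizing that the naive union over all $n$ fails, and that blocking by powers of $2$ cures it.
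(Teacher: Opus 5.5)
Your proof is correct, but it is organised differently from the paper's.

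\textbf{The paper's route.} The paper runs Borel--Cantelli over $x$ rather than over time. It sets $A_x=\bigcup_n A_{n,x}$ and bounds $\bP(A_x)$ by indexing on the count $k=\#\{i\le n: y_i=x\}$ instead of on $n$. A deviation with count $k$ can only occur at a time $n\le m_k=\lfloor k/(\epsilon p(x)^{\delta})\rfloor$. So by monotonicity $A_x\subseteq\bigcup_{k}\{\#\{i\le m_k: y_i=x\}\ge k\}$, and the same binomial tail bound gives $(e\,p(x)^{1-\delta}/\epsilon)^k$ for the $k$-th event. This is a single geometric series, $\ll p(x)^{1-\delta}$, with no case split. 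The paper then uses the strong law for \emph{every} $x$, plus a pigeonhole step, to pass from ``almost surely only finitely many $x$ ever cause a deviation'' to ``only finitely many $n$''.

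\textbf{Comparison.} Your dyadic blocking in $n$ does the same job as the paper's discretisation in $k$: both collapse a whole range of times to a single time via monotonicity of the counts. The price is a factor $2$ in the threshold and your two-regime sum. In return, you apply Borel--Cantelli directly to blocks of the $A_n$, and you need the strong law only for the finite set $F$. By Tonelli, your finite double sum also yields the paper's intermediate statement.

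\textbf{One presentational fix.} The term ``$\#\{\text{finite-set failures}\}$'' does not belong in a probability bound. Define $B_j$ using only $x\notin F$, and apply Borel--Cantelli to those events; this is what your final sentence effectively does.
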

\begin{proof}
    For $x\in X$ and $n\in \Z_{\geq 1}$, let
    $$
      A_{n,x} = \{y\in Y: \left|\tfrac{1}{n}\#\{i\leq n: y_i=x\}-p(x)\right| \geq \epsilon p(x)^{\delta}\},
    $$
    so that we have $A_n = \bigcup_{x\in X}A_{n,x}$.
    For $x\in X$, let
    \begin{eqnarray*}
        A_x & = & \bigcup_{n\in \Z_{\geq 1}} A_{n,x}\\
        & = & \{y\in Y: \exists n\in \Z_{\geq 1} \textup{ with } \left|\tfrac{1}{n}\#\{i\leq n: y_i=x\}-p(x)\right| \geq \epsilon p(x)^{\delta}\}.
    \end{eqnarray*}
    By the Strong Law of Large Numbers, almost all $y\in Y$ have the property that for all $x\in X$, there are only finitely many $n$ for which one has $y\in A_{n,x}$. Thus, it suffices to show that we have
    $$
        \bP(\{y\in Y: y \in A_x\textup{ for infinitely many }x\in X\})=0.
    $$

    Now, for all but finitely many $x\in X$ we have $p(x)< \epsilon p(x)^{\delta}$, 
    so it suffices to restrict attention to such $x$. For those,
    we have $y\in A_{n,x}$ only if $$\tfrac{1}{n}\#\{i\leq n: y_i=x\} > \epsilon p(x)^{\delta}.$$
    Suppose that we have $y\in A_{n,x}$, and say that $$\#\{i\leq n: y_i=x\} = k > n\epsilon p(x)^{\delta}.$$ In particular, we must have $n \leq \left\lfloor\frac{k}{\epsilon p(x)^{\delta}}\right\rfloor$. Call that bound $m_k$. We thus have $A_x \subset \bigcup_{k\in \Z_{\geq 1}}B_k$, where
    $$
    B_k=\{y\in Y: \#\{i\leq m_k: y_i=x\} \geq k\}.
    $$
    Those latter sets satisfy
\begin{eqnarray*}
\bP(B_k) \leq \binom{m_k}{k}p(x)^k
\leq \left(\frac{em_kp(x)}{k}\right)^k
\leq \left(\frac{ep(x)^{1-\delta}}{\epsilon}\right)^k,
\end{eqnarray*}
so that we have
$$
    \bP(A_x) \leq \sum_{k\geq 1}\left(\frac{ep(x)^{1-\delta}}{\epsilon}\right)^k  = \frac{ep(x)^{1-\delta}}{\epsilon}\cdot\left(1+\frac{ep(x)^{1-\delta}}{\epsilon}+\left(\frac{ep(x)^{1-\delta}}{\epsilon}\right)^2+\cdots\right).
$$
By again ignoring finitely many $x\in X$, we may assume that $\frac{ep(x)^{1-\delta}}{\epsilon}\leq \tfrac{1}{2}$, so that 
the bracketed term on the right hand side is bounded above by $2$ for all but finitely many $x\in X$. In summary, we have $\bP(A_x)\ll p(x)^{1-\delta}$. By assumption, the sum $\sum_{x\in X}\bP(A_x)\ll\sum_{x\in X}p(x)^{1-\delta}$ converges, so by the Borel--Cantelli Lemma we have
$$
\bP(\{y\in Y: y \in A_x\textup{ for infinitely many }x\in X\})=0,
$$
as claimed, which implies the result.
\end{proof}

\begin{definition}
    For $y\in Y$  and $n\in \mathbb{Z}_{\geq 1}$, we define the function $h_{n,y}\colon X\to \mathbb{R}_{\geq 0}$ by
\[h_{n,y}(x)=p(x)^{-1} \cdot \tfrac{1}{n}\#\{i\leq n : y_i=x\}.\]
\end{definition}

Note that  for $y\in Y$, $n\in \mathbb{Z}_{\geq 1}$, and   $f\in L^1(X)$, we have 
\begin{eqnarray} \label{exp:form}
  \frac{1}{n}\sum_{i=1}^nf(y_i) &=& \sum_{x\in X}\tfrac{1}{n}\#\{i\leq n : y_i=x\}f(x)=\bE(h_{n,y}\cdot f).
\end{eqnarray}

 \begin{corollary} \label{cor:almost_sure_convergence}
     Let $\delta\in (0,1)$, and suppose that $\sum_{x\in X}p(x)^{1-\delta}$ converges. 
 Set $u=1/(1-\delta)$.  Then, for almost all  $y\in Y$, we have $\bE(|h_{n,y}-1|^u)\rightarrow 0$ as $n\to \infty$.
 \end{corollary}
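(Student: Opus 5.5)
We will show $\bE(|h_{n,y}-1|^u)=\sum_{x\in X}p(x)\,|h_{n,y}(x)-1|^u\to 0$ for almost all $y$. Write $c_{n,y}(x)=\tfrac1n\#\{i\le n: y_i=x\}$, so that $|h_{n,y}(x)-1| = p(x)^{-1}|c_{n,y}(x)-p(x)|$. The plan is to combine Proposition \ref{prop:thintailBC}, which gives a uniform bound for all large $n$, with the Strong Law of Large Numbers, which gives pointwise convergence for each $x$. The two are then joined by a dominated-convergence type argument over the countable set $X$.

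\textbf{Step 1 (uniform domination).} Apply Proposition \ref{prop:thintailBC} with $\epsilon=1$. For almost all $y$ there is $N(y)$ such that for all $n\ge N(y)$ and all $x\in X$ we have $|c_{n,y}(x)-p(x)|<p(x)^{\delta}$. For such $n$,
\[
p(x)\,|h_{n,y}(x)-1|^u < p(x)\cdot p(x)^{(\delta-1)u} = p(x)^{1-\delta}\cdot p(x)^{\delta}\cdot p(x)^{-1}\cdot p(x)^{1-\delta}.
\]
We should compute this cleanly. Since $(\delta-1)u=-1$, the bound is $p(x)\cdot p(x)^{-1}=1$, which is not summable. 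So $\epsilon=1$ is too crude, and the exponent $u$ must be exploited differently; this is the main obstacle.

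The fix is to aim for a tail bound rather than a constant bound. Split $|h-1|^u = |h-1|\cdot|h-1|^{u-1}$ and use $u-1=\delta/(1-\delta)=\delta u$. Then
\[
p(x)\,|h-1|^u = |c-p|\cdot |h-1|^{\delta u} \le |c-p|\cdot \bigl(p^{\delta-1}\bigr)^{\delta u} = |c-p|\, p(x)^{-\delta}.
\]
This is still not obviously summable, so instead we use arbitrary $\epsilon$ together with a finite/cofinite split.

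\textbf{Step 2 (finite/cofinite split).} Fix $\eta>0$. Choose a finite set $F\subset X$ such that $\sum_{x\notin F} p(x)^{1-\delta}<\eta$, which is possible by the convergence hypothesis. Restrict to the measure $1$ set of $y$ on which the conclusion of Proposition \ref{prop:thintailBC} holds for $\epsilon=1$ and on which the Strong Law gives $c_{n,y}(x)\to p(x)$ for every $x$. For $x\notin F$ and $n\ge N(y)$ we have $c_{n,y}(x)\le p(x)+p(x)^\delta\le 2p(x)^\delta$, hence $h_{n,y}(x)\le 2p(x)^{\delta-1}$ and
\[
p(x)\,|h_{n,y}(x)-1|^u \le p(x)\max\bigl(1,2p(x)^{\delta-1}\bigr)^u \le 2^u p(x)\, p(x)^{-1}=2^u.
\]
This again is not summable, so the tail must be controlled by a different quantity. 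We therefore bound the tail using the true mass instead:
\[
\sum_{x\notin F} p|h-1|^u \le 2^{u-1}\sum_{x\notin F}\bigl(p\,h^u + p\bigr).
\]
Here $\sum_{x\notin F} p\,h^u = \sum_{x\notin F} c^u p^{1-u}$, and using $c\le 2p^\delta$ we get $c^{u}=c\cdot c^{u-1}\le c\,(2p^{\delta})^{\delta u}$. Thus $c^u p^{1-u}\le 2^{\delta u}\, c\, p^{\delta^2 u+1-u}$, where the exponent equals $1-\delta u(1-\delta)=1-\delta$. Bounding $c$ by $2p^\delta$ once more gives a term of size $p$, and the tail sum is at most a constant times $\sum_{x\notin F}p(x)$, which is small. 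The exponent bookkeeping in this step is where we expect the real difficulty, and the first thing to try is this splitting $c^u=c\cdot c^{u-1}$, with care taken over which factor to bound using Proposition \ref{prop:thintailBC}.

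\textbf{Step 3.} On the finite set $F$, the Strong Law gives $\sum_{x\in F}p|h_{n,y}-1|^u\to0$. Combining this with Step 2 yields $\limsup_n \bE(|h_{n,y}-1|^u)\ll \eta$. Taking $\eta=1/m$ for $m\in\Z_{\geq1}$, and intersecting the corresponding countably many measure $1$ sets, gives the conclusion for almost all $y$.
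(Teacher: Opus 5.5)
\textbf{Gap in Step 2.} Step 2 has a genuine gap, caused by an exponent error. Since $u-1=\delta u$ and $u(1-\delta)=1$, one has $\delta^2u+1-u=1-u(1-\delta^2)=1-(1+\delta)=-\delta$, not $1-\delta$. So what you actually get is $c^u p^{1-u}\le 2^{\delta u}\,c\,p^{-\delta}$. Bounding $c\le 2p^{\delta}$ once more then yields only the constant $2^{\delta u+1}$ per term, which is exactly the non-summable bound you had already found.

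This cannot be fixed by better bookkeeping. The pointwise information $|c_{n,y}(x)-p(x)|<\epsilon p(x)^{\delta}$ from Proposition \ref{prop:thintailBC} allows $c_{n,y}(x)\approx \epsilon p(x)^{\delta}$ with $p(x)$ tiny, and then $p(x)|h_{n,y}(x)-1|^u\approx \epsilon^u$. So no bound $\ll p(x)$ uniform in $x$ can come out of it, and the tail cannot be controlled term by term. You need averaged information on how much of the mass of $c_{n,y}$ sits on elements with small $p(x)$.

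\textbf{The missing ingredient.} Apply the Strong Law to the single function $p^{-\delta}$, which lies in $L^1(X)$ precisely because $\sum_x p(x)^{1-\delta}<\infty$. By \eqref{exp:form}, $\sum_x c_{n,y}(x)p(x)^{-\delta}=\tfrac1n\sum_{i\le n}p(y_i)^{-\delta}\to\sum_x p(x)^{1-\delta}$ for almost all $y$.

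You in fact had the right inequality in your second attempt in Step 1 and discarded it as ``not obviously summable''. With a general $\epsilon$ in place of $1$, it reads $p\,|h_{n,y}-1|^u\le \epsilon^{u-1}|c_{n,y}-p|\,p^{-\delta}$ for all $x$ and all large $n$. Summing over $x$ gives
\[
\bE(|h_{n,y}-1|^u)\le \epsilon^{u-1}\Bigl(\frac1n\sum_{i\le n}p(y_i)^{-\delta}+\sum_{x\in X}p(x)^{1-\delta}\Bigr),
\]
whose right hand side has $\limsup_n$ equal to $2\epsilon^{u-1}\sum_x p(x)^{1-\delta}$ almost surely. Letting $\epsilon\to 0$ along a countable sequence finishes the proof. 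This is exactly the paper's argument, phrased there via $B_n=\sup_x|h_{n,y}(x)-1|p(x)^{1/u}\to 0$.

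\textbf{Rescuing your split.} Your finite/cofinite split also works if you replace the final pointwise bound by the Strong Law for $p^{-\delta}\mathbbm{1}_{X\setminus F}$. This gives $\limsup_n\sum_{x\notin F}c_{n,y}(x)p(x)^{-\delta}\le\eta$. That variant needs Proposition \ref{prop:thintailBC} only with $\epsilon=1$, and gets its smallness from the tail of $\sum_x p(x)^{1-\delta}$ rather than from $\epsilon\to 0$.
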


 \begin{proof}
 By Proposition \ref{prop:thintailBC}, we have that for almost all $y\in Y$, the quantity 
\[B_n=\sup_{x\in X}|h_{n,y}(x)-1|p(x)^{1/u}\]
tends to $0$ as $n$ tends to $\infty$. To apply this, note that
\begin{eqnarray*}
\bE(|h_{n,y}-1|^u)\leq B_n^{u-1}\bE(|h_{n,y}-1|p^{-\delta})\leq B_{n}^{u-1}\big(\bE(h_{n,y}p^{-\delta})+ \bE(p^{-\delta})\big).
\end{eqnarray*}
Since the function $x\mapsto p(x)^{-\delta}$ is in $L^1(X)$ by assumption,  it follows from \eqref{exp:form} and the Strong Law of Large Numbers that the limit $\lim_{n\to \infty}\bE(h_{n,y}p^{-\delta})$ is equal to $\bE(p^{-\delta})$, hence exists, for almost all $y\in Y$.  This, and the fact that $B_n\to 0$ for almost all $y\in Y$, completes the proof.
 \end{proof}

 \begin{remark}\label{rmrk:Mourier}
Instead of deducing Corollary \ref{cor:almost_sure_convergence} from Proposition \ref{prop:thintailBC}, one can deduce it from Mourier's Strong Law for Banach space valued random variables \cite{M}, as we now sketch. 

For $i\in \Z_{\geq 1},$ define the function $g_i\colon Y\to L^u(X)$ by setting
\[g_i(y)(x)=p(x)^{-1}\mathbbm{1}_{y_i=x},\]
noting that $\tfrac{1}{n}\sum_{i=1}^ng_i(y)=h_{n,y}.$
Now $g_1,g_2,...$ is a sequence of i.i.d. random variables valued in the separable Banach space $L^u(X).$ The composition of $g_1$ with the $L^u$-norm $L^u(X)\to \R_{\geq 0}$ sends $y\in Y$ to $p(y_1)^{-\delta}$, hence has expectation 
\[\sum_{x\in X}p(x)^{1-\delta}<\infty.\]
Thus, by the version of Mourier's Strong Law stated as \cite[Corollary 3.7.21]{GN}, there is a measure $1$ subset  $Y_0$ of $Y$ on which the functions $\tfrac{1}{n}\sum_{i=1}^ng_i(y)=h_{n,y}$ converge in the $L^u$-norm to the Bochner expectation $\bE(g_1)$ of $g_1$. One computes that $\bE(g_1)$ is the constant function $1$, so for all $y\in Y_0$ we have 
$\bE(|h_{n,y}-1|^u)^{1/u}\rightarrow 0$ as $n\rightarrow \infty.$
 \end{remark}

\begin{theorem} \label{thm:main}
Fix $\delta\in (0,1)$ and set $q=1/\delta$. Suppose that $\sum_{x\in X}p(x)^{1-\delta}$ converges.
Then $L^q(X)$ is promising. 
\end{theorem}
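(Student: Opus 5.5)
The plan is to combine the identity \eqref{exp:form} with Corollary \ref{cor:almost_sure_convergence} via Hölder's inequality. The exponents $q=1/\delta$ and $u=1/(1-\delta)$ are conjugate, since $1/q+1/u=\delta+(1-\delta)=1$.

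First, note that $L^q(X)\subset L^1(X)$, because $p$ is a probability measure. So $\bE(|f|)$ is finite for every $f\in L^q(X)$.

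Let $Y'$ be the measure $1$ set provided by Corollary \ref{cor:almost_sure_convergence}: for $y\in Y'$ we have $\bE(|h_{n,y}-1|^u)\to 0$. This set does not depend on $f$, which is the whole point. It lets a single measure $1$ set work simultaneously for the uncountable family $L^q(X)$, whereas the strong law would only give a set depending on $f$.

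Now fix $y\in Y'$ and $f\in L^q(X)$. Using \eqref{exp:form}, I will write
\[
\Bigl|\tfrac{1}{n}\sum_{i=1}^n f(y_i)-\bE(f)\Bigr| = \bigl|\bE\bigl((h_{n,y}-1) f\bigr)\bigr| \leq \bE(|h_{n,y}-1|^u)^{1/u}\,\bE(|f|^q)^{1/q}.
\]
The inequality is Hölder's inequality with respect to the probability measure $p$ on $X$. The right hand side tends to $0$ as $n\to\infty$, since the second factor is a fixed finite number and the first tends to $0$ by the choice of $Y'$.

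One small point needs checking: that \eqref{exp:form} is legitimate, i.e. that $\bE(h_{n,y} f)$ is a genuine absolutely convergent sum. This holds because $h_{n,y}\cdot p$ is supported on the finitely many points $y_1,\dots,y_n$, so the sum is finite. Likewise $\bE(h_{n,y}-1)$-type manipulations are fine, as $f\in L^1$.

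There is no real obstacle here, since the heavy lifting was done in Proposition \ref{prop:thintailBC} and Corollary \ref{cor:almost_sure_convergence}. The only things to verify are that the exponents are conjugate and that the exceptional null set is uniform in $f$.
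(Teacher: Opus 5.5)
Your proposal is correct and is essentially the paper's proof: \eqref{exp:form}, H\"older's inequality with the conjugate exponents $u=1/(1-\delta)$ and $q=1/\delta$, and the $f$-independent measure $1$ set from Corollary \ref{cor:almost_sure_convergence}. Your explicit checks that $L^q(X)\subset L^1(X)$ (required by the definition of promising) and that \eqref{exp:form} is a finite sum are small points the paper leaves implicit.
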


\begin{proof}

Set $u=1/(1-\delta)$, so that we have $1/u+1/q=1$.  
Take $f\in L^q(X)$.  
By \eqref{exp:form} and H\"{o}lder's inequality, we have 
\begin{eqnarray*}
\Big| \frac{1}{n}\sum_{i=1}^nf(y_i)-\bE(f)\Big|&=&  \left|\bE((h_{n,y}-1)\cdot f)\right|\\
&\leq &   \bE(|h_{n,y}-1|^u)^{1/u}\bE(|f|^q)^{1/q}.
\end{eqnarray*}
By Corollary \ref{cor:almost_sure_convergence}, there is a measure $1$ subset of $Y$, independent of the choice of $f$, on which the right hand side tends to $0$ as $n\to \infty$.  
\end{proof}

\begin{corollary}\label{cor:verypromising}
Suppose that $\sum_{x\in X}p(x)^{s}$ converges for all $s>0$. Then $\bigcup_{q>1}L^q(X)$ is promising. 
\end{corollary}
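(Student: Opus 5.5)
The idea is to reduce to countably many applications of Theorem \ref{thm:main} and intersect the resulting measure $1$ sets. For each integer $m\geq 2$, set $\delta_m = 1/m \in (0,1)$ and $q_m = m$. By hypothesis (with $s = 1-\delta_m > 0$), the sum $\sum_{x\in X}p(x)^{1-\delta_m}$ converges. Theorem \ref{thm:main} then says that $L^{m}(X)$ is promising, with some measure $1$ subset $Y_m\subset Y$ witnessing this. Put $Y' = \bigcap_{m\geq 2} Y_m$. This is a countable intersection of measure $1$ sets, so it has measure $1$.

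Next I use that $p$ is a probability measure, so $L^q(X)\subset L^{q'}(X)$ whenever $q\geq q'$. Indeed, $|f|^{q'}\leq 1 + |f|^{q}$ pointwise, and constants are integrable. Now let $f\in\bigcup_{q>1}L^q(X)$, say $f\in L^q(X)$ with $q>1$. If $q\geq 2$, then $f\in L^2(X)$. If $1<q<2$, then $q\geq 1+1/k$ for some $k$, but $1+1/k$ is not an integer. So integer exponents $m\geq 2$ do not suffice, and I should index instead by $q_m = 1+1/m$, i.e.\ $\delta_m = m/(m+1)\in(0,1)$. Then $1-\delta_m = 1/(m+1)>0$, so the hypothesis still gives convergence of $\sum_{x}p(x)^{1-\delta_m}$, and Theorem \ref{thm:main} applies to each $L^{1+1/m}(X)$.

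With that indexing, every $f\in L^q(X)$ with $q>1$ lies in $L^{1+1/m}(X)$ for $m$ large enough that $1+1/m\leq q$. The same $m$ gives $f\in L^1(X)$, so $\bE(|f|)$ is finite. For every $y\in Y'\subset Y_m$, the average of $f$ along $y$ therefore converges to $\bE(f)$. This shows $\bigcup_{q>1}L^q(X)$ is promising.

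The only step needing care is choosing a countable cofinal family of exponents tending down to $1$, so that the uncountable union can be handled by a countable intersection of full-measure sets. The nesting of $L^q$ spaces on a probability space then covers everything else; I do not expect any real obstacle.
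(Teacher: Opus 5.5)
Your argument is correct and matches the paper's proof: apply Theorem \ref{thm:main} to a countable sequence of exponents $q_m = 1+1/m$ decreasing to $1$, intersect the resulting measure $1$ sets, and use the nesting $L^q(X)\subset L^{q'}(X)$ for $q\geq q'$ on a probability space. When you write it up, drop the abandoned integer-exponent attempt and define $Y'$ directly as the intersection of the sets $Y_m$ attached to $L^{1+1/m}(X)$.
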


\begin{proof}
For any fixed $q>1$, the set $L^q(X)$ is promising by Theorem \ref{thm:main}. Now conclude by taking a sequence $(q_n)_n$ tending to $1$ from above.
\end{proof}

We now prove the other direction of Theorem \ref{thm:intro_main}. We follow the strategy of \cite[\S 7]{BL20}. First, we have the following generalisation of \cite[Proposition 7.2]{BL20}.
\begin{lemma}\label{lem:overeager}
    Let $\delta\in (0,1)$ be such that we have $\sum_{x\in X}p(x)^{1-\delta} = \infty$.
    Then there is a subset $Y'$ of $Y$ of measure $1$ with the property that for all $y\in Y'$, for all $\epsilon\in \R_{>0}$, there exist infinitely many $x\in X$ such that for some $i\leq \epsilon p(x)^{-\delta}$ one has $y_i=x$.
\end{lemma}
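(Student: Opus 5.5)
The plan is a second-moment (or independence) Borel--Cantelli argument over $x$. First, reduce to a single $\epsilon$: it suffices to prove, for each fixed $\epsilon=1/m$ with $m\in\Z_{\geq1}$, that almost surely there are infinitely many $x$ with $y_i=x$ for some $i\leq \epsilon p(x)^{-\delta}$. Then intersect the countably many measure-$1$ sets, and note that the property for $\epsilon=1/m$ implies it for all $\epsilon\geq 1/m$. So fix $\epsilon$ and write $N_x=\lfloor \epsilon p(x)^{-\delta}\rfloor$. Let $E_x=\{y: \exists i\leq N_x,\ y_i=x\}$. Since $p(x)\to 0$ along any enumeration of $X$, we have $N_x\to\infty$, so $N_x\geq \tfrac12\epsilon p(x)^{-\delta}$ for all but finitely many $x$. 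Moreover $N_xp(x)\leq \epsilon p(x)^{1-\delta}\to 0$. Hence
$$\bP(E_x)=1-(1-p(x))^{N_x}\geq \tfrac12 N_xp(x)\geq \tfrac{\epsilon}{4}p(x)^{1-\delta}$$
for almost all $x$, and so $\sum_x\bP(E_x)=\infty$ by hypothesis.

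The main obstacle is that the events $E_x$ are not independent, so the second Borel--Cantelli lemma does not apply directly. I would handle this with the Kochen--Stone (Erd\H{o}s--Rényi) form of Borel--Cantelli, which says
$$\bP(E_x \text{ i.o.})\geq \limsup_{F}\frac{(\sum_{x\in F}\bP(E_x))^2}{\sum_{x,x'\in F}\bP(E_x\cap E_{x'})}$$
over finite sets $F$ exhausting $X$. The events are in fact negatively correlated. Indeed, $\bP(E_x^c\cap E_{x'}^c)=(1-p(x)-p(x'))^{\min(N_x,N_{x'})}(1-p(\cdot))^{|N_x-N_{x'}|}$ (with the appropriate $p(\cdot)$), and this is at most $\bP(E_x^c)\bP(E_{x'}^c)$ because $1-a-b\leq (1-a)(1-b)$. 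Equivalently, $\bP(E_x\cap E_{x'})\leq \bP(E_x)\bP(E_{x'})$. Therefore the Kochen--Stone ratio is at least
$$\frac{S_F^2}{S_F^2+S_F}\to 1,\qquad S_F=\sum_{x\in F}\bP(E_x),$$
since $S_F\to\infty$. This gives probability $1$.

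Alternatively, one can avoid Kochen--Stone by showing $\bP(\bigcap_{x\in F}E_x^c)\leq \prod_{x\in F}\bP(E_x^c)$ for finite $F$. This holds by negative association of the multinomial counts, or directly by induction via the inequality above. Then $\bP(\text{no } E_x \text{ for } x\notin F_0)\leq \prod_{x\notin F_0}(1-\bP(E_x))=0$ for every finite $F_0$, because the sum diverges. I would present this product version: it gives that almost surely infinitely many $E_x$ occur, and that is exactly the claim for this $\epsilon$.
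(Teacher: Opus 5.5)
Your proposal is correct and follows the same route as the paper. You reduce to countably many $\epsilon$, show $\bP(E_x)\asymp \epsilon p(x)^{1-\delta}$, and use the negative correlation of the events $E_x$ to invoke a generalised Borel--Cantelli lemma; your Kochen--Stone step is essentially the Borel/Chung--Erd\H{o}s version the paper cites. Beyond the paper, you verify the pairwise negative correlation explicitly, where the paper only asserts it. Your product-form alternative is also a valid self-contained substitute, as long as you obtain the $k$-fold bound by the factor-by-factor computation using $1-\sum_j a_j\leq\prod_j(1-a_j)$, rather than by induction from the pairwise inequality alone.
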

\begin{proof}
    First, fix $\epsilon\in \R_{>0}$ and $x\in X$. For a random $y\in Y$, the probability that we have $x=y_i$ for some $i\leq \epsilon p(x)^{-\delta}$ is $1-(1-p(x))^{\lfloor \epsilon p(x)^{-\delta}\rfloor}\sim \epsilon p(x)^{1-\delta}$. Note that these events for different $x$ are not independent, but they are negatively correlated, which allows us to apply the Borel--Cantelli Lemma as stated in \cite[p. 48 ff.]{Borel} or \cite[eqn. (0)]{ChungErdos}. Since $\sum_x p(x)^{1-\delta}$ is assumed to diverge, that more general version of the Borel--Cantelli Lemma implies that there is a subset $Y_\epsilon$ of $Y$ of measure $1$ such that for all $y\in Y_\epsilon$ there exist infinitely many $x\in X$ with $x=y_i$ for some $i\leq \epsilon p(x)^{-\delta}$. Now let $(\epsilon_n)_n$ be a sequence of positive real numbers tending to $0$. Then $Y' = \bigcap_n Y_{\epsilon_n}$ satisfies the conclusion of the lemma.
\end{proof}

\begin{theorem}\label{thm:notpromising}
  Let $\delta\in (0,1)$ be such that $\sum_{x\in X}p(x)^{1-\delta}$ diverges. Then for almost all $y\in Y$ there exists a function $f\in L^{1/\delta}(X)$ such that
     $$
     \limsup_n \tfrac{1}{n}\sum_{i=1}^n f(y_i) = \infty.
     $$
\end{theorem}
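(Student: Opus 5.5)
Fix $y$ in the measure $1$ set $Y'$ of Lemma \ref{lem:overeager}, intersected with the measure $1$ set on which $y_i$ hits every $x\in X$ only finitely often up to any given time (which is automatic). The plan is to build $f$ supported on a sparse sequence of points $x_1,x_2,\dots$ chosen from the infinitely many points supplied by Lemma \ref{lem:overeager}. At each chosen point, $f$ is large enough that a single early occurrence of $x_j$ in $y$ forces a large Cesàro average. At the same time, the contribution of each point to $\bE(|f|^{1/\delta})$ is kept summable. I would take $f$ nonnegative, so no cancellation can occur and every partial average is bounded below by any single term divided by $n$.

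Concretely, I proceed inductively with $\epsilon_j = 2^{-j}$. Given $x_1,\dots,x_{j-1}$, Lemma \ref{lem:overeager} with $\epsilon=\epsilon_j$ gives infinitely many $x$ with $y_i=x$ for some $i\le \epsilon_j p(x)^{-\delta}$. I pick such an $x_j$ distinct from the previous points, and also with $p(x_j)$ small enough that $p(x_j)\le 2^{-j}$. Let $i_j$ be the witnessing index, so that $i_j\le \epsilon_j p(x_j)^{-\delta}$. Then set $f(x_j)=p(x_j)^{-\delta}$ and $f=0$ elsewhere. This gives
\[
\bE(|f|^{1/\delta})=\sum_j p(x_j)^{-1}p(x_j)=\sum_j 1,
\]
which diverges, so the values need a small damping factor. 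I therefore set $f(x_j)=c_j p(x_j)^{-\delta}$ with $c_j = j^{-2\delta}$. Then
\[
\bE(|f|^{1/\delta})=\sum_j c_j^{1/\delta}=\sum_j j^{-2}<\infty,
\]
so $f\in L^{1/\delta}(X)$. At time $n=i_j$, nonnegativity gives
\[
\tfrac1n\sum_{i\le n}f(y_i)\ \ge\ \frac{f(x_j)}{i_j}\ \ge\ \frac{c_j p(x_j)^{-\delta}}{\epsilon_j p(x_j)^{-\delta}} = c_j 2^{j} = j^{-2\delta}2^j\to\infty .
\]
The indices $i_j$ tend to infinity because $p(x_j)\to 0$ and each $y_i$ is a single point. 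Hence the averages along the subsequence $n=i_j$ are unbounded, and the $\limsup$ is $\infty$.

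The main obstacle is only bookkeeping. First, the choice of $\epsilon_j$ must decay faster than $c_j$, which is achieved by using the lemma's "for all $\epsilon$" uniformly on one measure $1$ set $Y'$. Second, the points must be distinct, which is possible because the lemma supplies infinitely many candidates at each stage. The probabilistic content is already contained in Lemma \ref{lem:overeager}. Note that $f$ depends on $y$, which is exactly what the statement allows.
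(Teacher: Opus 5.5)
Your proof is correct and is essentially the paper's argument. As there, you apply Lemma \ref{lem:overeager} with $\epsilon_j$ decaying faster than $j^{-2\delta}$ (the paper takes $j^{-2\delta-1}$, you take $2^{-j}$), choose distinct $x_j$, set $f(x_j)=j^{-2\delta}p(x_j)^{-\delta}$, and bound the average at the hitting time by $f(x_j)/i_j$. Your extra condition $p(x_j)\le 2^{-j}$ is harmless but unnecessary: the averages at times $i_j$ are unbounded and each is finite, so $\limsup=\infty$ follows directly.
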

\begin{proof}
    Let $Y'$ be as in Lemma \ref{lem:overeager}. Then for every $y\in Y'$ there is a sequence $(x_n)_n$ of distinct elements of $X$ such that for each $n\in \Z_{\geq 1}$ we have
    $$\min\{i\in \Z_{\geq 1}: y_i=x_n\} \leq n^{-2\delta-1}p(x_{n})^{-\delta}.$$

For $n\in \Z_{\geq 1}$, let $i(n)=\min\{j\in \Z_{\geq 1}: y_j=x_n\}
\leq n^{-2\delta-1}p(x_{n})^{-\delta}$. For $x\in X$, define $f(x)=0$ if $x\neq x_n$ for any $n\in \Z_{\geq 1}$,
and $f(x_n)=n^{-2\delta}p(x_n)^{-\delta}$. Then we have $\bE(f^{1/\delta})=\sum_{n\in \Z_{\geq 1}}n^{-2}$,
which converges. On the other hand, for every $n\in \Z_{\geq 1}$, one has
$$
\frac{1}{i(n)}\sum_{j=1}^{i(n)}f(y_j)\geq\frac{f(x_n)}{i(n)}\geq \frac{n^{-2\delta}p(x_n)^{-\delta}}{n^{-2\delta-1}p(x_{n})^{-\delta}}=n,
$$
which gets arbitrarily large, as $n$ varies.
\end{proof}

\begin{proposition}\label{prop:veryunpromising}
Suppose that for all $s<1$, the sum $\sum_{x\in X}p(x)^{s}$ diverges. Then $\bigcap_{q>1}L^q(X)$ is not promising.
\end{proposition}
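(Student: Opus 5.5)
The plan is to run the argument of Theorem \ref{thm:notpromising} with the exponent $\delta$ allowed to drift to $0$ along the sequence. For almost every $y$ I will build a single function $f$ that lies in $L^q(X)$ for every $q>1$ and satisfies $\limsup_n \tfrac1n\sum_{i\le n} f(y_i)=\infty$.

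First, the probability-$1$ set. By hypothesis, $\sum_{x\in X}p(x)^{1-\delta}$ diverges for every $\delta\in(0,1)$. So for each $m\ge 2$, Lemma \ref{lem:overeager} with $\delta=1/m$ gives a measure-$1$ set $Y'_m$. Put $Y''=\bigcap_{m\ge 2}Y'_m$, which still has measure $1$ since the intersection is countable. For $y\in Y''$, every $m\ge2$ and every $\epsilon>0$, infinitely many $x\in X$ satisfy $y_i=x$ for some $i\le \epsilon p(x)^{-1/m}$.

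Next, the sequence $(x_n)_n$, chosen inductively for fixed $y\in Y''$. For $n\ge 2$, set $\delta_n=1/n$ and $\epsilon_n=n^{-3}$. Because infinitely many admissible $x$ are available at each step, I can pick $x_n$ distinct from $x_2,\dots,x_{n-1}$ with
$$
i(n):=\min\{j: y_j=x_n\}\le n^{-3}p(x_n)^{-1/n}.
$$
Define $f(x_n)=n^{-2}p(x_n)^{-1/n}$ and $f(x)=0$ for all other $x$. Then, exactly as in Theorem \ref{thm:notpromising},
$$
\frac{1}{i(n)}\sum_{j\le i(n)}f(y_j)\ge \frac{f(x_n)}{i(n)}\ge n .
$$
Hence the Cesàro averages are unbounded, and in particular they do not converge to $\bE(f)$.

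Then the integrability, which is the only point that needs care: one $f$ must lie in every $L^q$ with $q>1$ simultaneously. This is why $\delta_n$ must tend to $0$. For fixed $q>1$,
$$
\bE(|f|^q)=\sum_n n^{-2q}p(x_n)^{1-q/n}.
$$
For $n\ge q$ the exponent $1-q/n$ is nonnegative, so $p(x_n)^{1-q/n}\le 1$ and the tail is dominated by $\sum_n n^{-2q}<\infty$. The finitely many terms with $n<q$ are finite because each $p(x_n)>0$. So $f\in L^q(X)$ for all $q>1$, and in particular $f\in L^1(X)$, since $p$ is a probability measure.

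Finally, the conclusion. Suppose $\bigcap_{q>1}L^q(X)$ were promising, with witnessing measure-$1$ set $Y'$. Then $Y'\cap Y''$ would have measure $1$ and so be nonempty. Any $y$ in it gives an $f$ in the space whose averages along $y$ fail to converge to $\bE(f)$, which is a contradiction.
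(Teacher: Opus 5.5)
Your proof is correct and follows essentially the same route as the paper. Like the paper, you intersect the measure-$1$ sets from Lemma \ref{lem:overeager} over $\delta=1/m$, choose distinct $x_n$ with $i(n)\le n^{-3}p(x_n)^{-1/n}$, and support $f$ on the $x_n$ so that the average at time $i(n)$ is at least $n$. The differences are cosmetic: your extra factor $n^{-1}$ lets the trivial bound $p(x_n)^{1-q/n}\le 1$ for $n\ge q$ give integrability without the paper's auxiliary condition $p(x_n)\le 2^{-n}$, and starting at $m\ge 2$ keeps you inside the lemma's stated range $\delta\in(0,1)$.
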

\begin{proof}
    For $n\in \Z_{\geq 1}$, apply Lemma \ref{lem:overeager} with $\delta=1/n$ to obtain a measure $1$ subset $Y_n\subset Y$. Then the intersection $\bar{Y}=\bigcap_{n\in \Z_{\geq 1}}Y_n$ has measure $1$ and has the property that for all $y\in \bar{Y}$ there exists a sequence $(x_n)_n$ of pairwise distinct elements of $X$ with $p(x_n) \leq 2^{-n}$ and $\min(i: y_i = x_n) \leq n^{-2}p(x_n)^{-1/n}$ for all $n\in \Z_{\geq 1}$. Denote that minimum by $i(n)$.

    Define $f\colon X\to \R$ by $f(x)=0$ if $x\neq x_n$ for any $n$, and $f(x_n) = n^{-1}p(x_n)^{-1/n}$ for all $n\in \Z_{\geq 1}$. Then, for all $q>1$, we have
    $$
        \bE(f^q) = \sum_{n=1}^{\infty}n^{-q}p(x_n)^{1-q/n}.
    $$
    For all $n>2q$ we have $p(x_n)^{1-q/n} \leq 2^{-n/2}$, so the sum converges. Thus, we have $f\in \bigcap_{q>1}L^q(X)$. On the other hand, for all $n\in \Z_{\geq 1}$ we have $\tfrac{1}{i(n)}\sum_{j=1}^{i(n)}f(y_j) \geq n$, hence $\limsup_m \tfrac{1}{m}\sum_{i=1}^m f(y_i) = \infty$.
\end{proof}


We conclude by showing that the Cohen--Lenstra probability distribution satisfies the hypotheses of Corollary \ref{cor:verypromising}.
\begin{proposition}\label{prop:CL}
    Fix a prime number $\ell$, and let $X$ be a full set of isomorphism class representatives of finite abelian $\ell$-groups.
    Define a probability distribution $p$ on $X$ by setting, for a finite abelian $\ell$-group $A$,
    $$
        p(A) = \prod_{k=1}^\infty(1-\ell^{-k})\cdot \frac{1}{\#\Aut A}.
    $$
    Then for all $s\in \R_{>0}$, the sum
    $$
        \sum_{A\in X}p(A)^s
    $$
    converges.
\end{proposition}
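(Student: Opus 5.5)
The constant $\prod_{k\geq 1}(1-\ell^{-k})$ is a positive real number, so it suffices to show that $\sum_{A\in X}(\#\Aut A)^{-s}$ converges for every $s>0$. The plan is to bound $\#\Aut A$ from below by a power of $\#A$, and then count groups of a given order via partitions.

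\emph{Step 1: lower bound for $\#\Aut A$.} Write $A\cong\bigoplus_j \Z/\ell^{\lambda_j}$ with $\lambda$ a partition of $m$, where $\#A=\ell^m$. The automorphism group contains all maps $a\mapsto a+\phi(a)$ with $\phi\in\mathrm{End}(A)$ and $\phi(A)\subseteq \ell A$. Such a map is invertible, since $\phi$ is then topologically nilpotent and $1+\phi$ has an inverse given by a geometric series. Alternatively, one can use the standard formula $\#\Aut A=\#\mathrm{End}(A)\cdot\prod(\text{factors }1-\ell^{-k})$, in which the correction product is at least $\prod_{k\geq1}(1-\ell^{-k})^{r}$ with $r$ the rank. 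The simplest clean bound I aim for is
\[
\#\Aut A\geq \#\mathrm{Hom}(A,\ell A)\geq \#A\cdot \ell^{-\mathrm{rk}A}\ \text{ or better}.
\]
Concretely, I will use $\#\mathrm{End}(A)=\ell^{\sum_{i,j}\min(\lambda_i,\lambda_j)}\geq \ell^{m}$, together with $\#\Aut A\geq \#\mathrm{End}(A)\cdot c^{r}$ for a constant $c\in(0,1)$. Since $\sum_{i,j}\min(\lambda_i,\lambda_j)\geq m+r(r-1)$, the rank-dependent loss $c^{r}$ is absorbed by the extra $\ell^{r(r-1)}$, up to a bounded constant. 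The conclusion of this step is $\#\Aut A\geq C\,(\#A)^{1}=C\ell^{m}$ for an absolute constant $C>0$ depending only on $\ell$.

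\emph{Step 2: counting.} The number of isomorphism classes of abelian groups of order $\ell^m$ is the partition number $P(m)$, which satisfies $P(m)\leq e^{\pi\sqrt{2m/3}}$, and in particular $P(m)=\ell^{o(m)}$. Hence
\[
\sum_{A\in X}p(A)^s\ll \sum_{m\geq0}P(m)\,\ell^{-sm},
\]
and this converges for every $s>0$ because $P(m)$ grows subexponentially.

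The main obstacle is Step 1: making the lower bound on $\#\Aut A$ uniform in the rank. I will handle it by the reduction-mod-$\ell$ exact sequence
\[
1\to \{1+\phi:\phi(A)\subseteq\ell A\}\to\Aut A\to \prod_k \mathrm{GL}_{r_k}(\F_\ell),
\]
where $r_k$ is the multiplicity of the part $k$ in $\lambda$. The kernel has size $\#\mathrm{End}(A)/\prod_k\ell^{r_k^2}$, and the image is the full product, of size $\prod_k\ell^{r_k^2}\prod_{i\leq r_k}(1-\ell^{-i})$. This gives $\#\Aut A\geq \#\mathrm{End}(A)\cdot\prod_{k\geq1}(1-\ell^{-k})^{\#\{k:r_k>0\}}$. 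The number of distinct parts is at most $\sqrt{2m}$, so the loss factor is $\ell^{-O(\sqrt m)}$, which is again subexponential and is harmless in Step 2. This route also avoids the rank bookkeeping entirely.
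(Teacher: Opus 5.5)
Your outline is the paper's: bound $\#\Aut A$ below by $\#A$ up to a factor that is harmless against $\ell^{-sm}$, then count groups of order $\ell^m$ by partitions. The inequality $\#\Aut A\geq \#\mathrm{End}(A)\prod_{k\geq 1}(1-\ell^{-k})^{\#\{k:r_k>0\}}$ you reach is true. It follows from the standard formula $\#\Aut A=\#\mathrm{End}(A)\prod_k\prod_{i=1}^{r_k}(1-\ell^{-i})$, and after that either your $\ell^{r(r-1)}$ absorption or your $\sqrt{2m}$ bound on the number of distinct parts finishes Step 1.

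The problem is the exact sequence you give as the actual justification, which is wrong as written.
\begin{itemize}
\item The subgroup $\{1+\phi:\phi(A)\subseteq \ell A\}$ is the kernel of reduction $\Aut A\to\Aut(A/\ell A)=\mathrm{GL}_r(\mathbb{F}_\ell)$. The image of that map is a parabolic subgroup, not $\prod_k \mathrm{GL}_{r_k}(\mathbb{F}_\ell)$.
\item The kernel of the natural surjection $\Aut A\to\prod_k\mathrm{GL}_{r_k}(\mathbb{F}_\ell)$ is $1+J$, where $J$ is the Jacobson radical of $\mathrm{End}(A)$. This $J$ strictly contains $\Hom(A,\ell A)$ as soon as $\lambda$ has two distinct part sizes.
\item For $A=\Z/\ell\Z\oplus\Z/\ell^2\Z$ one has $\#\Hom(A,\ell A)=\ell^2$, while $\#\mathrm{End}(A)/\prod_k\ell^{r_k^2}=\ell^5/\ell^2=\ell^3$. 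So the kernel size you assert is not the size of the group you wrote down.
\item Taken literally, your sequence would give $\#\Aut A=\ell^2(\ell-1)^2$ instead of the true $\ell^3(\ell-1)^2$.
\end{itemize}
To repair it, use $J$ in place of $\Hom(A,\ell A)$: one has $\mathrm{End}(A)/J\cong\prod_k M_{r_k}(\mathbb{F}_\ell)$, and $\Aut A$ is the preimage of $\prod_k\mathrm{GL}_{r_k}(\mathbb{F}_\ell)$. Alternatively, simply cite the standard formula.

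Note how little is actually needed. The paper writes $A=\Z/e\Z\times H$ with $e$ the exponent of $A$. The maps $(a,h)\mapsto(ua+\psi(h),h)$, for $u\in(\Z/e\Z)^\times$ and $\psi\in\Hom(H,\Z/e\Z)$, are pairwise distinct automorphisms. Since the exponent of $H$ divides $e$, there are $(1-\ell^{-1})e\cdot\#H=(1-\ell^{-1})\#A$ of them. This gives $\#\Aut A\geq(1-\ell^{-1})\#A$ in one line, with no rank bookkeeping and no structure theory of $\mathrm{End}(A)$, and it is all your Step 2 requires. Your route, once corrected, yields the sharper $\#\Aut A\gg \ell^{m+r(r-1)}$, but that extra strength plays no role here.
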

\begin{proof}
    Ignoring the normalising constant, write the sum as
    $$
        \sum_{n=0}^\infty \sum_{A\in X: \atop\#A = \ell^n}\#(\Aut A)^{-s}. 
    $$
    Let $A$ be a finite abelian $\ell$-group, let $e$ be its exponent. Then we may write $A = \Z/e\Z \times H$
    for an abelian $\ell$-group $H$, and observe that $\Aut A$ contains $(\Z/e\Z)^\times \ltimes \Hom(H,\Z/e\Z)$ as a subgroup. We therefore have $\#\Aut A \geq \tfrac{\ell-1}{\ell}\#A$. Next, for $n\in \Z_{\geq 0}$, the number of abelian groups of order $\ell^n$ is the number of partitions of $n$, which is $<e^{C\sqrt{n}}$ for a constant $C>0$. Thus, the sought-for sum is
    $$
        \sum_{n=0}^\infty \sum_{A\in X:\atop \#A = \ell^n}\#(\Aut A)^{-s}
        \ll \sum_{n=0}^\infty \exp(C\sqrt{n}-ns\log\ell),
   $$
    which converges.
\end{proof}
\begin{remark}
    The most general form of the Cohen--Lenstra--Martinet heuristics, as formulated in \cite[Conjecture 1.5]{BL20}, takes as input a finite group $G$ and a finitely generated $\Q[G]$-module $V$ that is orthogonal to the trivial module $\Q$, and defines a probability distribution $p_{G,V}$ on certain finite $\Z_{\ell}[G]$-modules, where $\ell$ is a ``good'' prime number and $\Z_{\ell}$ denotes the ring of $\ell$-adic integers. The probability distribution discussed in Proposition \ref{prop:CL} is the one corresponding to the choice $G=\Z/2\Z$ and $V=0$. Essentially the same proof carries over to the case of general $G$, using the description of finite modules over local orders in semisimple algebras given in \cite[Lemma 2.7(ii)]{CM}. Moreover, the case of general $V$ follows from the case $V=0$, since for an arbitrary $V$ there is a constant $C$ such that we have $p_{G,V}(A) \leq C p_{G,0}(A)$ for all suitable $\Z_{\ell}[G]$-modules $A$.
\end{remark}

\end{document}